\documentclass[12pt,leqno]{amsart}
\usepackage{a4,amssymb,amsthm,amscd,amsmath,verbatim,url,enumerate,mathdots,mathrsfs,cleveref}
\usepackage[pdftex,usenames,dvipsnames]{color}
\usepackage{mathabx}
\usepackage{fancyhdr}
\usepackage{tikz}
\usepackage{tikz-cd}

\title{Four-dimensional quadratic forms over $\cc(\!(t)\!)(X)$}
\author{Parul Gupta}
\address{Universiteit Antwerpen, Departement Wiskunde, Middelheim\-laan~1, 2020 Antwerpen, Belgium.}
\address{IISER Pune, Dr.~Homi Bhabha Road, Pashan, Pune 411 008, India.}
\email{parul.gupta@iiserpune.ac.in}
\thanks{This work was supported by the \emph{Fonds Wetenschappelijk Onderzoek -- Vlaanderen (FWO)} in the \emph{FWO Odysseus Programme} (project `{Explicit Methods in Quadratic Form Theory}'), the \emph{Bijzonder Onderzoeksfonds (BOF), University of Antwerp} (project BOF-DOCPRO-4, 2865), and the \emph{Science and Engeneenring Research Board  (SERB)}, India
(Grant CRG/2019/000271).}
\date{\today}


\newcommand{\la}{\langle}
\newcommand{\ra}{\rangle}

\newcommand{\qq}{\mathbb Q}

\newcommand{\cc}{\mathbb C}

\newcommand{\nat}{\mathbb{N}} 
\newcommand{\zz}{\mathbb Z}

\newcommand{\mc}[1]{\mathcal{#1}}
\newcommand{\mf}[1]{\mathfrak{#1}}

\newcommand{\mg}[1]{{#1}^{\times}}
\newcommand{\sq}[1]{{#1}^{\times 2}}

\newcommand{\ovl}{\overline}



\swapnumbers
\numberwithin{equation}{section}

\newtheorem*{thm*}{Theorem}

\newtheorem*{cor*}{Corollary}
\newtheorem*{cor1}{Corollary 1}
\newtheorem*{cor2}{Corollary 2}

\newtheorem*{lem*}{Lemma}
\newtheorem*{qu*}{Question}

\theoremstyle{definition}

\renewenvironment{proof}{\par\noindent {\em Proof:}}{\hfill$\Box$\medskip}
\theoremstyle{plain}

\begin{document}

\begin{abstract}
For quadratic forms in $4$ variables defined over the rational function field in one variable over $\cc(\!(t)\!)$, the validity of the local-global principle for isotropy  with respect to different sets of discrete valuations is examined.  

\medskip
\noindent
{\sc{Classification (MSC 2010):} 11E04,12E30, 12J10} 

\medskip
\noindent
{\sc{Keywords:}} 
isotropy, local-global-principle, rational function field, valuation, completion 
\end{abstract}

\maketitle

\section{Introduction}

Let $E$ be a field of characteristic different from $2$ and let $E(X)$ denote the rational function field in one variable over $E$.  

For  $ E = \cc(\!(t)\!)$, the field of Laurent series in one variable over the complex numbers,
the quadratic form 
$$Y_1^2+tY_2^2+tY_3^2+X(Y_1^2+Y_2^2+ t Y_4^2)$$
in the variables $Y_1, Y_2,Y_3,Y_4$ over $E(X)$ has no non-trivial zero, but it  has a non-trivial zero over the completion of $E(X)$ with respect to any non-trivial valuation on $E(X)$ that is trivial on $E$.
This is in contrast to the situation  when $E$ is a finite field, by the Hasse-Minkowski Theorem (See \cite[Chapter VI, Theorem 66.1]{OM}). 
Note that, in both cases, the field $E$ has a unique extension of each degree in a fixed algebraic closure.

By a \emph{$\zz$-valuation}, we mean a valuation with value group $\zz$. 
A quadratic form is \emph{isotropic} if it has a non-trivial zero, otherwise it is \emph{anisotro\-pic}.
In all generality,
an anisotropic quadratic form over $E(X)$ of dimension at most $3$ remains anisotropic over the completion of $E(X)$ with respect to some $\zz$-valuation on $E(X)$ that is trivial on $E$; this follows for example from Milnor's Exact Sequence \cite[Theorem IX.3.1]{Lam}. 
The case of $4$-dimensional quadratic forms  is the first case over $E(X)$  where the validity of such a local-global principle for isotropy  depends on the base field $E$.

When $E$ is a nondyadic local field, using a result of Lichtenbaum \cite{Lichtenbaum}, one obtains that a $4$-dimensional anisotropic quadratic form over $E(X)$ remains anisotropic over the completion of $E(X)$ with respect to some $\zz$-valuation on $E(X)$ that is trivial on $E$  (see \cite[Remark 3.8]{CTPS}).
This resembles the case where $E$ is a finite field.

In contrast to the situations where $E$ is a finite field or a local field, for $E=\cc(\!(t)\!)$ the example of the quadratic form above shows that the local-global principle for isotropy of $4$-dimensional quadratic forms over $E(X)$ fails with respect to $\zz$-valuations that are trivial on $E$. 
However, anisotropy of this quadratic form can be detected over the larger field $\cc(X)(\!(t)\!)$, by using Springer's Theorem (see \cite[Proposition VI.1.9]{Lam}). 

Consider the more general situation where the field $E$ is complete with respect to a nondyadic $\zz$-valuation $v$. 
In this case, a local-global principle for isotropy was obtained in \cite{CTPS} using a geometric setup. 
Let $\mc O_v$ denote the valuation ring of $v$.
By a  \emph{model for $E(X)$ over $\mc O_v$} we mean a two-dimensional integral normal projective flat $\mc O_v$-scheme $\mathscr X$ whose function field is isomorphic to $E(X)$.
Codimension-one points on a model of $E(X)$ over $\mc O_v$ correspond to certain $\zz$-valuations on $E(X)$.
For a model $\mathscr X$ of $E(X)$ over $\mc O_v$ let $\Omega_\mathscr X$ denote the set of $\zz$-valuations given by codimension-one points of $\mathscr X$.  
Consider the set $\Omega= \bigcup_{\mathscr X } \Omega_\mathscr X$ where the union is taken over all models $\mathscr X$ of $E(X)$ over $\mc O_v$.
It follows from  \cite[Theorem 3.1 and Remark 3.2]{CTPS} that an anisotropic quadratic form over $E(X)$ remains anisotropic over the completion of $E(X)$  with respect to some $\zz$-valuation in $\Omega$.
One may ask whether this remains true if one replaces $\Omega$ by $\Omega_{\mathscr X}$ for some well-chosen model $\mathscr X$ of $E(X)$ over $\mc O_v$.

The aim of this note is to show that this is not the case: 
 if the residue field of $v$ is separably closed then, for any model $\mathscr X$ of $E(X)$ over $\mc O_v$,  there exists an anisotropic  $4$-dimensional quadratic form over $E(X)$ which is isotropic over the completion of $E(X)$ with respect to any $w\in \Omega_{\mathscr X}$   (Corollary 2).
Let $\pi \in \mc O_v$ be a uniformiser of~$v$.
For any model $\mathscr X$ of $E(X)$ over $\mc O_v$, the set $\{w(\pi) \mid  w \in\Omega_{\mathscr X} \}$ is finite and hence  it has an upper bound.   
However, for any positive integer $r$, the quadratic form  
$$\varphi_r = (X^r-\pi)Y_1^2+ (X^{r+1}+\pi)Y_2^2 + \pi X Y_3^2+  X(X^r+\pi)Y_4^2 $$
is anisotropic over $E(X)$,  but it is isotropic over the completion of $E(X)$ with respect to any $\zz$-valuation $w$ on $E(X)$ with $w(\pi) <r$ (Theorem).  
The construction of $\varphi_r$ is inspired by the example in \cite[Remark 3.6]{CTPS} of an anisotropic $6$-dimensional quadratic form over $\qq_p(X)$ where $p$ is an odd prime.

\section{Results}

We assume some familiarity with basic quadratic form theory over fields, for which we refer to \cite{Lam}.    
We first fix some notation and recall some results.

By a \emph{quadratic form} or simply a \emph{form} we mean a regular quadratic form. 
Let $E$ always be a field of characteristic different from $2$ and let $\mg{E}$ denote its  multiplicative group.
For $a_1, \ldots, a_n \in \mg E$ the diagonal form $a_1X_1^2+\dots+a_nX_n^2$ is denoted by $\la a_1, \ldots,a_n\ra$.

Let $v$ be a $\zz$-valuation on $E$.
We denote the corresponding
valuation ring, its maximal ideal and its residue field respectively  by $\mc O_v, \mf m_v$ and $\kappa_v$.
For an element $a\in \mc O_v$, let $\ovl{a}$ denote the image $a+\mf m_v$ of $a$ under the residue map $\mc O_v \rightarrow \kappa_v$.
The completion of $E$ with respect to $v$ is denoted by $E_v$. 
We say that $v$ is \emph{henselian} if it extends uniquely to every finite field extension of $E$.
Complete discretely valued fields are henselian (see \cite[Theorem 1.3.1 and Theorem 4.1.3]{EP}).
We recall a consequence of Hensel's Lemma:  
\begin{lem*}
Let $v$ be a henselian $\zz$-valuation on $E$ such that $v(2) =0$. Then
\begin{enumerate}[$(a)$]
\item The form $\la u_1, u_2 \ra$ over $E$ is isotropic if and only if $\ovl{u_1u_2} \in -\sq{\kappa}_v$. 
\item
If $\kappa_v$ is separably closed, then every $3$-dimensional form over $E$ is isotropic. 
\end{enumerate}
 \end{lem*}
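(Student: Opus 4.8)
The plan is to deduce~(a) from the classical criterion that a binary form $\la a,b\ra$ over a field is isotropic exactly when $-ab$ is a square, together with a Hensel-type description of which units of $\mc O_v$ become squares in $E$; then~(b) will follow from~(a) by a pigeonhole argument on valuations.

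For~(a) I would first note that $v(2)=0$ makes $2$ a unit of $\mc O_v$, so $\car\kappa_v\neq 2$. Recalling (this is standard, see \cite{Lam}) that $\la u_1,u_2\ra$ is isotropic over $E$ if and only if $-u_1u_2\in\sq E$, it suffices to show that a unit $u\in\mg{\mc O_v}$ lies in $\sq E$ if and only if $\ovl u\in\sq{\kappa}_v$. The forward implication is immediate: if $d^2=u$ then $v(d)=0$, so reducing gives $\ovl d^{\,2}=\ovl u$. For the converse I would apply the henselian hypothesis to $f(T)=T^2-u\in\mc O_v[T]$: whenever $\ovl c^{\,2}=\ovl u$, the reduction of $f$ has $\ovl c$ as a root, and this root is simple since $f'(\ovl c)=2\ovl c\neq 0$ in $\kappa_v$, so $f$ acquires a root in $\mc O_v$. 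Applying this with $u=-u_1u_2$ yields~(a).

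For~(b), given a $3$-dimensional form $\la a_1,a_2,a_3\ra$ over $E$, I would first replace each $a_i$ by $a_is_i^2$ for a suitable $s_i\in\mg E$; this is an isometry, so it does not affect isotropy, and since the value group of $v$ is $\zz$ one can arrange $v(a_i)\in\{0,1\}$ for all $i$. By pigeonhole two of these valuations coincide, say $v(a_1)=v(a_2)=\epsilon$ after renumbering. Fixing a uniformiser $\pi$ of $v$ and writing $a_i=\pi^\epsilon u_i$ with $u_1,u_2\in\mg{\mc O_v}$, the form $\la a_1,a_2\ra$ is isotropic if and only if $\la u_1,u_2\ra$ is. Finally, since $\kappa_v$ is separably closed and $\car\kappa_v\neq 2$, the nonzero element $-\ovl{u_1u_2}$ is a square in $\kappa_v$; hence $\ovl{u_1u_2}\in-\sq{\kappa}_v$, and part~(a) shows that $\la u_1,u_2\ra$, and therefore $\la a_1,a_2\ra$ and a fortiori $\la a_1,a_2,a_3\ra$, is isotropic.

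I do not expect a genuine obstacle; the point that needs the most care is the bookkeeping around residue characteristic. The hypothesis $v(2)=0$ is used exactly to ensure $\car\kappa_v\neq 2$, and this enters twice: it makes the Hensel linearisation in~(a) nondegenerate ($f'(\ovl c)=2\ovl c\neq 0$), and it is what lets one conclude in~(b) that every nonzero element of the separably closed field $\kappa_v$ is a square. Everything else is a routine application of Hensel's Lemma and the standard theory of binary forms.
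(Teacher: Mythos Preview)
Your proposal is correct and follows essentially the same route as the paper: Hensel's Lemma applied to $T^2-u$ for~(a), and the observation that any $3$-dimensional diagonal form contains, after scaling, a binary subform $\lambda\la 1,u\ra$ with $u\in\mg{\mc O_v}$ for~(b). The only notable difference is that you prove both implications in~(a), whereas the paper's proof records only the ``if'' direction (which is the one actually used); your pigeonhole phrasing of~(b) is just a slightly more explicit version of the paper's one-line reduction.
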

\begin{proof}
Since  $\ovl{u_1u_2} \in -\sq{\kappa}_v$ the polynomial equation $t^2 + \ovl{u_1u_2}$ has a solution in $\kappa_v$ and  since $v(2) =0$ it follows by Hensel's Lemma \cite[Theorem 4.1.3$(4)$]{EP} that $u_1u_2 \in -E^2$, whereby the quadratic form $\la u_1, u_2 \ra$ over $E$ is isotropic.
Since $\kappa_v$ is separably closed with $v(2) =0$, we have that $\ovl{u} \in -\sq{\kappa}_v$ for all $u \in \mg {\mc O}_v$.
Since every $3$-dimensional quadratic form over $E$ contains a $2$-dimensional form isometric to $\lambda \la 1 ,u\ra$ for some $u \in \mg {\mc O}_v$ and $\lambda \in \mg E$;  $(b)$ follows from $(a)$. 
\end{proof}

The set of all $\zz$-valuations on $E(X)$ is denoted by $\Omega_{E(X)}$.
For $r\in \nat$, we define
$$\Omega_r = \{ w\in \Omega_{E(X)} \mid  w(\mg E) =i\zz \mbox{ for some } 0\leq i \leq r \}.$$  
With this notation, $\Omega_0$ is the set of all $E$-trivial $\zz$-valuations on $E(X)$.
We recall that any monic irreducible polynomial $p \in E[X]$ determines a unique $\zz$-valuation $v_p$ on $E(X)$ which is trivial on $E$ and such that $v_p(p)=1$.
There is further a unique $\zz$-valuation $v_\infty$ on $E(X)$ such that $v_\infty(f)=-\deg(f)$ for any $f\in E[X]\setminus\{0\}$.
Moreover, every $\zz$-valuation $w$ on $E(X)$ trivial on $E$ is either equal to $v_\infty$ or to $v_p$ for some monic irreducible polynomial $p \in E[X]$ (see \cite[Theorem 2.1.4]{EP}), and in either of the two cases the residue field is a finite field extension of $E$.

\begin{thm*}
Let $v$ be a henselian $\zz$-valuation on $E$ such that $v(2) =0$.
Assume that $\kappa_v$ is separably closed. 
Let $\pi\in \mg E$ be such that $v(\pi) =1$ and let $r \in \nat$. 
Then the quadratic form  
$$\varphi_r = \langle X^r-\pi, X^{r+1}+\pi, \pi X, X(X^r+\pi) \rangle$$
 is isotropic over $E(X)_w$ for every $\zz$-valuation $w \in \Omega_{r-1}$ but anisotropic over $E(X)_w$ for some $w\in \Omega_r$.
 
\end{thm*}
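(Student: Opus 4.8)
The plan is to treat the two assertions separately: anisotropy over a single explicit valuation in $\Omega_r$, and isotropy over every valuation in $\Omega_{r-1}$, both by passing to the completion and invoking Springer's Theorem (\cite[Proposition VI.1.9]{Lam}).

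For the anisotropy, I would take $w$ to be the valuation on $E(X)$ with $w\big(\sum_j a_jX^j\big)=\min_j\big(r\,v(a_j)+j\big)$ for $a_j\in E$; this is a valuation since its associated graded ring is the domain $\kappa_v[\bar\pi^{\pm1},\bar X]$. It has value group $\zz$ with $w(\mg E)=r\zz$, so $w\in\Omega_r$, and residue field $\kappa_v(\bar t)$ with $\bar t=\overline{X^r/\pi}$ transcendental over $\kappa_v$. One computes $w(X^r-\pi)=w(X^{r+1}+\pi)=r$ and $w(\pi X)=w(X(X^r+\pi))=r+1$, so over $E(X)_w$ Springer's Theorem exhibits $\varphi_r$ as an orthogonal sum of a piece of value $r$ and a piece of value $r+1$ whose two residue forms over $\kappa_v(\bar t)$ are, up to similarity, $\la\bar t-1,1\ra$ and $\la1,\bar t+1\ra$. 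Since $\kappa_v$ is separably closed and $v(2)=0$, $-1$ is a square in $\kappa_v$, so each of these binary forms is isotropic only if the linear polynomial $\bar t\mp1$ is a square in the rational function field $\kappa_v(\bar t)$, which it is not. Hence both residue forms are anisotropic, $\varphi_r$ is anisotropic over $E(X)_w$, and therefore also over $E(X)$.

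For the isotropy, I would first record two residual inputs. From part $(a)$ of the Lemma applied to $\la1,1\ra$, the form $\la1,1\ra$ is isotropic over $E$ (because $-1\in\sq{\kappa_v}$), so $-1\in\sq E$ and hence $-1$ is a square in every residue field occurring below and $\la c,c\ra\cong\la c,-c\ra$ throughout. Moreover, the residue field of any $\zz$-valuation $v_1$ on $E$ has all its elements squares: given $a\in\mg{\kappa_{v_1}}$, one uses the approximation theorem for the independent valuations $v$ and $v_1$ (the case $v_1\sim v$ being immediate) to lift $a$ to a $v$-unit $\tilde a$, and then part $(a)$ of the Lemma together with $\kappa_v$ separably closed makes $\la1,-\tilde a\ra$ isotropic over $E$, i.e.\ $\tilde a\in\sq E$ and so $a\in\sq{\kappa_{v_1}}$. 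Combining this with part $(b)$ of the Lemma (for $w$ trivial on $E$, where $\kappa_w$ is a finite extension of $E$) and with the standard bound for the $u$-invariant of a function field in one variable, one obtains that for every $w\in\Omega_{r-1}$ any form of dimension $\ge3$ over $\kappa_w$ is isotropic. Now fix $w\in\Omega_{r-1}$, set $F=E(X)_w$, and study $\varphi_r$ over $F$ by Springer's Theorem, keeping track of the $w$-values of the four coefficients in terms of $w(X)$ and of $w(\pi)\in w(\mg E)=i\zz$ with $0\le i\le r-1$. In most positions two coefficients have a common $w$-value and residues that agree up to sign — for example if $w(X)\ge0$ and $r\,w(X)>w(\pi)$ then $X^r-\pi$ and $X^{r+1}+\pi$ have value $w(\pi)$ with $\overline{(X^r-\pi)/\pi}=-1$ and $\overline{(X^{r+1}+\pi)/\pi}=1$, while if $w(X)<0$ then $X^{r+1}+\pi$ and $X(X^r+\pi)$ have value $(r+1)w(X)$ with residues $1$ and $1$ — so one of the two residue forms contains the hyperbolic plane $\la1,-1\ra$, forcing $\varphi_r$ to be isotropic over $F$; and in the positions not covered this way one of the residue forms has dimension $\ge3$ over $\kappa_w$ and so is isotropic by the preceding remark. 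In every case $\varphi_r$ is isotropic over $F$.

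The hard part is the isotropy direction: it requires an exhaustive case analysis over all $w\in\Omega_{r-1}$, organised by the position of $X$ and $\pi$ relative to $w$, together with the residual control of the fields $\kappa_w$; the anisotropy direction, by contrast, is a single explicit Springer computation once the valuation $w\in\Omega_r$ above is in hand.
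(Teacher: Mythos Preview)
Your strategy coincides with the paper's. For the anisotropy you build the weighted monomial valuation $w(\sum a_jX^j)=\min_j(rv(a_j)+j)$ directly on $E(X)$; the paper obtains the \emph{same} valuation by adjoining $s=\pi^{1/r}$, taking the Gauss extension of $v'=rv$ to $E'(Y)$ with $Y=X/s$, and restricting to $E(X)$. Your residue computation (two binary residue forms $\la \bar t-1,1\ra$ and $\la 1,\bar t+1\ra$ over $\kappa_v(\bar t)$ with $\bar t=\overline{X^r/\pi}$) is exactly the paper's, phrased in the variable $\bar Y^r$ instead of $\bar Y$.

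For the isotropy the paper also argues by Springer and a case split, locating in Cases~2 and~3 precisely the hyperbolic binary subforms you describe (your ``$w(X)\ge0,\ rw(X)>w(\pi)$'' and ``$w(X)<0$'' are its Cases~2 and~3). The one substantive difference is your blanket claim that every $3$-dimensional form over $\kappa_w$ is isotropic for \emph{all} $w\in\Omega_{r-1}$. The paper never needs this: its only use of $u(\kappa_w)\le2$ is in Case~1, where $w$ is trivial on $E$, $\kappa_w$ is a finite extension of $E$, and Lemma~(b) applies immediately. Your route instead passes through the (correct and nice) observation that $\kappa_{v_1}$ is quadratically closed for every $\zz$-valuation $v_1$ on $E$, but the step from there to $u(\kappa_w)\le 2$ via ``the standard bound for the $u$-invariant of a function field in one variable'' is not a standard statement over a merely quadratically closed base: you would need, e.g., $\mathrm{cd}_2(\kappa_{v_1})=0\Rightarrow\mathrm{cd}_2(\kappa_w)\le1\Rightarrow\mathrm{Br}(\kappa_w)[2]=0$, together with $\mathrm{char}(\kappa_{v_1})\ne2$, none of which you verify. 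Since the paper's organisation avoids this detour entirely, you could simply follow it and invoke $u(\kappa_w)\le2$ only when $w$ is trivial on $E$.
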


\begin{proof}
Set $F=E(X)$.
We first show that $\varphi_r$ is isotropic over $F_w$ for all $w\in \Omega_{r-1}$. 
Consider $w\in \Omega_{r-1}$.

\smallskip
\noindent \underline {Case 1:} $w(\pi) =0 = w(X)$.
Then $\kappa_w$ is a finite extension of $E$. Since $v$ is henselian, there is a unique extension $v'$ of $v$ to $\kappa_w$, and $v'$ again henselian. Furthermore, it follows by \cite[Theorem 3.3.4]{EP} that $v'(\mg {\kappa}_w)$ is isomorphic to $\zz$ and $\kappa_{v'}$ is separably closed. It follows by part $(b)$ of the Lemma that every $3$-dimensional quadratic form over $\kappa_w$ is isotropic.
We have that $w =v_p$ for some monic irreducible polynomial $p \in E[X]$ such that $p \neq X$.  
Note that, in this case at least three diagonal coefficients of $\varphi_r$ are units in $\mc O_w$.
It follows by Springer's Theorem \cite[Proposition VI.1.9]{Lam} that $\varphi_r$ is isotropic over $F_w$.

\smallskip    
 
\noindent \underline {Case 2:} $0 \leq w (\pi)< r$ and $1\leq w(X)$. 
Let $ u = (X^r\pi ^{-1}-1)(X^{(r+1)}\pi^{-1}+1)$.
Then $w(u) =0$ and $\ovl{u} =-1 \in - \sq{\kappa}_w$.
It follows by part $(a)$ of the Lemma that the form $\pi^{-1} \la  X^r-\pi,  X^{r+1}+\pi \ra $ is isotropic over $F_w$.
Thus $\varphi_r$ is isotropic over ${F}_w$.

\smallskip
\noindent \underline {Case 3:} $w(X) < 0 \leq  w (\pi)< r$. 
Note that $\kappa_w$ is either a finite extension of $E$ or a rational function field over a finite extension of $\kappa_v$; since $-1\in \sq{\kappa}_v$, we get in either case that $-1 \in \sq{\kappa}_w$.
Consider $ u = (1+  \pi X^{-{(r+1)}})(1+\pi X^{-r})$. 
We have that $w(u) =0$ and $\ovl{ u } = 1 \in \sq{\kappa}_w = -\sq{\kappa}_w$. 
It follows by part $(a)$ of the Lemma that the form $X^{-(r+1)} \la X^{r+1}+\pi , X(X^{r}+\pi)\ra$ is isotropic over $F_w$.
Thus $\varphi_r$ is isotropic over $F_w$.

\smallskip

We have thus shown that $\varphi_r$ is isotropic over $F_w$ for every $w \in \Omega_{r-1}$. 
Now we show that $\varphi_r$ is anisotropic over $F_w$ for some $w\in \Omega_F$.

Let $E' = E(s)$, where $s =\sqrt[r]{\pi}$. Then $v$ extends uniquely to a  valuation on $E'$ which we again denote by $v$. Note that $s^r = \pi$ in $E'$ and hence $v (\pi) = rv(s)$. Then $v' = rv$ is a $\zz$-valuation on $E'$. 

Let $L= E'(X)$ and let $Y =\frac{X}{s}$. 
Note that $L =E'(Y)$.
By \cite[Corollary 2.2.2]{EP}, there exists a unique extension of $v'$ to $L$ such that $v(Y) =0$ and $\ovl{Y}$ is transcendental of $\kappa_{v'}$; we further have that $\kappa_w = \kappa_{v'}(\ovl{Y})$ and $w(\mg L) = v'(\mg {E'}) = \zz$.
Since $w(Y) =0$, we have that $w(X) =w(s) =1$. 
We get that
$$\varphi_r = \langle s^r(Y^r-1), s^r(sY+1), s^{r+1}Y, s^{r+1}Y(Y^r+1) \rangle $$
Consider the forms $\varphi_1 = \langle Y^r-1,~ sY+1\rangle$ and $\varphi_2 = \langle Y,~ Y(Y^r+1) \rangle$. 

Since $ \ovl{Y}^r-1, \ovl{Y}^{r} +1 \notin - \sq{\kappa}_w$, it follows by Springer's Theorem \cite[Proposition VI.1.9]{Lam} that the quadratic form $s^{-r}\varphi_r$ is anisotropic over~$L_w$. 
Hence $\varphi_r$ is anisotropic over $L_w$.
We obtain that $\varphi_r$ is anisotropic over $F_{w|_F}$. 
Note that, $w(\pi) = w(s^r) =rw(s) =r$, thus $w\in \Omega_r$.
\end{proof}

We now provide a different perspective to the above theorem.
For a subset $\Omega\subseteq \Omega_{E(X)}$,  
we say that $\Omega$ has the \emph{finite support property} if for every $f \in \mg{E(X)}$ the set $\{w\in \Omega\mid w(f) \neq 0\}$ is finite.
It is well-known that $\Omega_{0}$ has the finite support property.
When $E$ carries a discrete valuation the set $\Omega_{E(X)}$ does not have the finite support property.
However, for any model $\mathscr X$ of $E(X)$ over $\mc O_v$, the set $\Omega_\mathscr X$ contains $\Omega_0$ and  has the finite support property. 
We show the following:

\begin{cor1}
Let $v$ be a henselian $\zz$-valuation on $E$ with $v(2) =0$. 
Assume that $\kappa_v$ is separably closed. Let $\Omega \subseteq \Omega_{E(X)}$ be a subset with the finite support property. 
Then there exists an anisotropic  $4$-dimensional quadratic form over $E(X)$ which is isotropic over $E(X)_w$ for every $w\in \Omega$.
\end{cor1}

\begin{proof}
Let $\pi \in \mg E$ be such that $v(\pi) =1$.
Since $\Omega$ has the finite support property, the set $\{w\in \Omega\mid w(\pi) \neq 0  \}$
is finite. 
Set 
$r= 1+\max\{w(\pi) \mid w\in \Omega\}$.
Clearly $\Omega\subseteq\Omega_{r-1} $.
Then the form $\varphi_r$ in the Theorem is isotropic over $E(X)_w$ for every $w\in \Omega$, but anisotropic over $E(X)$. 
\end{proof}

\begin{cor2}
Let $v$ be a henselian $\zz$-valuation on $E$ with $v(2) =0$. Assume that $\kappa_v$ is separably closed.
Let $\mathscr X$ be a regular model of $E(X)$ over $\mc O_v$.
Then there exists an anisotropic  $4$-dimensional quadratic form over $E(X)$ which is isotropic over $E(X)_w$ for every $w\in \Omega_{\mathscr X}$.
\end{cor2}

\begin{proof}
By  \cite[Chapter ~II, Lemma~6.1]{H}, for every element $f\in \mg {E(X)}$ the set $\{w \in\Omega_{\mathscr X} \mid w(f) \neq 0 \}$ is finite, hence the statement follows by Corollary 1.
\end{proof}

\subsubsection*{Acknowledgments}
The author wishes to thank David Leep, Suresh Venapally, Karim Johannes Becher, Gonzalo Manzano Flores and Marco Zaninelli for many inspiring discussions and comments related to this article. 
This article is based on the author's PhD-thesis, prepared under the supervision of Karim Johannes Becher (\emph{Universiteit Antwerpen}) and Arno Fehm (\emph{Technische Universit\"at Dresden}) in the framework of a joint PhD at \emph{Universiteit Antwerpen} and \emph{Universit\"at Konstanz}.

\bibliographystyle{amsalpha}

\begin{thebibliography}{}

\bibitem{CTPS}
J.-L.~Colliot-Th\'el\`ene, R.~Parimala, and V.~Suresh, \emph{Patching and
  local-global principles for homogeneous spaces over function fields of
  $p$-adic curves}, Comment. Math. Helv. (2012), no.~87, 1011--1033.

\bibitem{EP}
A.J.~Engler and A.~Prestel, \emph{Valued fields}, Springer-Verlag, 2005.

\bibitem{H}
R.~Hartshorne, \emph{Algebraic geometry}, vol.~52, Springer-Verlag, New
  York-Berlin, 1977.
\bibitem{Lam}
T.Y.~Lam, \emph{Introduction to quadratic forms over fields}, American
  Mathematical Society, 2005.

\bibitem{Lichtenbaum}
S.~Lichtenbaum, \emph{Duality theorems for curves over $p$-adic fields},
  Inventiones Mathematicae (1969), no.~7, 120--136.

\bibitem{OM}
O.T.~O'Meara, \emph{Introduction to quadratic forms}, Springer-Verlag Berlin
  Heidelberg GmbH, 1973.
  
\end{thebibliography}

\end{document}